\definecolor{bl}{gray}{0.9}
\def\FR{\mathop{\rm FR}\nolimits}
\def\diag{\mathop{\rm diag}\nolimits}
\def\grad{\mathop{\rm grad}\nolimits}
\def\D{{\rm D}}
\def\id{\mathop{\rm id}\nolimits}
\def\DY{{\rm DY}}
\def\sign{{\rm sign}}
\providecommand{\abs}[1]{\lvert#1\rvert}
\providecommand{\norm}[1]{\lVert#1\rVert}
\newtheorem{Def}{Definition}[section]
\newtheorem{Thm}{Theorem}[section]
\newtheorem{Prop}{Proposition}[section]
\newtheorem{Prob}{Problem}[section]
\newtheorem{Cor}{Corollary}[section]
\newtheorem{Assump}{Assumption}[section]
\newtheorem{predfn}{Definition}[section]
\newtheorem{prerem}[predfn]{Remark}
\title{A Dai--Yuan-type Riemannian conjugate gradient method\\ with the weak Wolfe conditions}
\author{Hiroyuki Sato\thanks{{\tt hsato@rs.tus.ac.jp}}\\
Department of Information and Computer Technology\\ Tokyo University of Science, Tokyo 125-8585, Japan}
\date{\today}
\begin{document}
\maketitle
\begin{abstract}
This article describes a new Riemannian conjugate gradient method and presents a global convergence analysis.
The existing Fletcher--Reeves-type Riemannian conjugate gradient method is guaranteed to be globally convergent if it is implemented with the strong Wolfe conditions.
On the other hand, the Dai--Yuan-type Euclidean conjugate gradient method generates globally convergent sequences under the weak Wolfe conditions.
This article deals with a generalization of Dai--Yuan's Euclidean algorithm to a Riemannian algorithm that requires only the weak Wolfe conditions.
The global convergence property of the proposed method is proved by means of the scaled vector transport associated with the differentiated retraction.
The results of numerical experiments demonstrate the effectiveness of the proposed algorithm.
\end{abstract}

\noindent {\bf Keywords:} Riemannian optimization; Conjugate gradient method; Global convergence; Weak Wolfe conditions; Scaled vector transport

\pagenumbering{arabic}
\section{Introduction}\label{sec1}

The Euclidean non-linear conjugate gradient method \cite{nocedal2006numerical} for minimizing a non-linear objective function $f:\mathbb R^n\to \mathbb R$ without any constraints is a generalization of the linear conjugate gradient method proposed by Hestenes and Stiefel \cite{hestenes1952methods}.
The steepest descent method, which is the simplest iterative optimization technique, does not need the Hessian of the objective function, but generally suffers from slow convergence.
Newton's method has a property of locally quadratic convergence, but this does not extend to global convergence.
Additionally, we need to compute the Hessian of the objective function at each step in Newton's method.
On the other hand, the conjugate gradient method ensures global convergence and is much faster than the steepest descent method.
Furthermore, it does not need the Hessian of the objective function.
Therefore, the conjugate gradient method is one of the most important optimization methods, and has been intensively researched.

The non-linear conjugate gradient method in Euclidean space $\mathbb R^n$ is characterized by its computation of search directions.
The search direction $\eta_k$ at the current iterate $x_k\in\mathbb R^n$ is computed by
\begin{equation}
\eta_k=-\nabla f(x_k)+\beta_k\eta_{k-1},\qquad k\ge 0,\label{ECGsearchvec}
\end{equation}
where $\beta_0=0$ and $\beta_k$ is a parameter that determines the property of the conjugate gradient method.
There are various choices of $\beta_k$, and a good choice leads to better convergence.
As with other line-search-based optimization methods, once a search direction is computed, the next iterate $x_{k+1}$ is computed by
\begin{equation}
x_{k+1}=x_k+\alpha_k\eta_k,\label{ls_euc}
\end{equation}
where the step size $\alpha_k>0$ is computed such that $\alpha_k$ approximately satisfies
\begin{equation*}
f(x_k+\alpha_k\eta_k)\approx\min_{\alpha>0}\{f(x_k+\alpha\eta_k)\}.
\end{equation*}
A frequently used rule for computing the step size is the Wolfe rule.
Under the Wolfe rule, $\alpha_k$ at the $k$-th iterate is computed such that $\alpha_k$ satisfies the Wolfe conditions
\begin{align}
f(x_k+\alpha_k\eta_k)\le f(x_k)+c_1\alpha_k \nabla f(x_k)^T\eta_k,\label{armijo}\\
\nabla f(x_k+\alpha_k\eta_k)^T\eta_k\ge c_2\nabla f(x_k)^T\eta_k\label{curvature}
\end{align}
for predetermined constants $c_1$ and $c_2$ with $0<c_1<c_2<1$.
In practice, $c_1$ and $c_2$ are often taken so as to satisfy $0 < c_1 < c_2 < 1/2$ in the conjugate gradient method.
To avoid confusion with the strong Wolfe conditions (in which the inequality \eqref{curvature} is replaced by a stricter condition $\abs{\nabla f(x_k+\alpha_k\eta_k)^T\eta_k}\le c_2\abs{\nabla f(x_k)^T\eta_k}$), we refer to the Wolfe conditions \eqref{armijo} and \eqref{curvature} as the weak Wolfe conditions.

We are interested in how to choose a good $\beta_k$ in \eqref{ECGsearchvec}.
A well-known choice proposed by Fletcher and Reeves \cite{fletcher1964function} is
\begin{equation*}
\beta_{k}^{\FR}=\frac{\nabla f(x_{k})^T\nabla f(x_{k})}{\nabla f(x_{k-1})^T\nabla f(x_{k-1})}. \label{fletcher_euc}
\end{equation*}
If the step sizes are computed so as to satisfy the strong Wolfe conditions, the conjugate gradient method with $\beta^{\FR}_k$ has global convergence.
In \cite{dai1999nonlinear}, Dai and Yuan proposed the following refinement of $\beta^{\FR}_k$:
\begin{equation}
\beta_k^{\DY}=\frac{\nabla f(x_k)^T\nabla f(x_k)}{\eta_{k-1}^Ty_k},\qquad y_k=\nabla f(x_k)-\nabla f(x_{k-1}).\label{betaintro}
\end{equation}
An advantage of $\beta^{\DY}_k$ is that it ensures the conjugate gradient method is globally convergent when implemented with only the weak Wolfe conditions.
There is no longer a need to assume that each step size satisfies the strong Wolfe conditions.

Beyond unconstrained optimization methods in Euclidean space, the idea of Riemannian optimization, or optimization on Riemannian manifolds, has recently been developed \cite{AbsMahSep2008,edelman1998geometry}.
Unconstrained optimization methods, such as the steepest descent method and Newton's method, have been generalized to Riemannian manifolds.
The conjugate gradient method has been generalized to Riemannian manifolds to some extent, but remains in the developmental stage.
In \cite{AbsMahSep2008}, Absil, Mahony, and Sepulchre introduced the notion of a vector transport to implement a Riemannian conjugate gradient method.
By means of this vector transport, Ring and Wirth performed a global convergence analysis of the Fletcher--Reeves-type Riemannian conjugate gradient method under the assumption that the vector transport as the differentiated retraction does not increase the norm of the search direction vector \cite{ring2012optimization}.
In \cite{sato2013cg}, Sato and Iwai introduced the notion of a scaled vector transport. This allowed them to develop an improved method, with a global convergence property that could be proved without the assumption made in \cite{ring2012optimization}.

The purpose of this article is to propose a new choice of $\beta_k$ for the Riemannian conjugate gradient method based on Dai--Yuan's $\beta^{\DY}_k$ in the Euclidean conjugate gradient method. We will also prove the global convergence property of the proposed algorithm under the weak Wolfe conditions.
Furthermore, we perform some numerical experiments to demonstrate that the proposed Dai--Yuan-type Riemannian conjugate gradient method is preferable to the existing Fletcher--Reeves-type method developed in \cite{sato2013cg},
and show that a step size satisfying the weak Wolfe conditions is easier to find than one that satisfies the strong Wolfe conditions.

This article is organized as follows.
In Section \ref{sec2}, we introduce several geometric objects necessary for Riemannian optimization.
We also define a Riemannian version of the weak Wolfe conditions, which are important in our algorithm.
In Section \ref{sec3}, we review the Dai--Yuan-type Euclidean conjugate gradient method, and discuss how to generalize $\beta^{\DY}_k$ to $\beta_k$ on a Riemannian manifold.
We take an approach based on another expression of \eqref{betaintro}, and propose a new algorithm.
Section \ref{sec4} provides a global convergence analysis of the present algorithm. This is analogous to a discussion in \cite{dai1999nonlinear}.
The notion of a scaled vector transport introduced in \cite{sato2013cg} plays an important role in our analysis.
In Section \ref{sec5}, we describe some numerical experiments that are intended to evaluate the performance of the proposed algorithm.
The results show that the proposed Dai--Yuan-type algorithm is preferable to the Fletcher--Reeves-type algorithm.
Our concluding remarks are presented in Section \ref{sec6}.

\section{General Riemannian optimization and Riemannian conjugate gradient method}\label{sec2}
In this section, we briefly review Riemannian optimization, especially the Riemannian conjugate gradient method.
Our problem is as follows.
\begin{Prob}\label{general_prob}
\begin{align*}
{\rm minimize} \,\,\,\,\,& f(x),\\
{\rm subject\,\,to} \,\,\,\,\,& x\in M,
\end{align*}
\end{Prob} where $M$ is a Riemannian manifold endowed with a Riemannian metric $\langle\cdot,\cdot\rangle$ and the norm of a tangent vector $\xi\in T_xM$ is defined to be $\norm{\xi}_x=\sqrt{\langle\xi,\xi\rangle_x}$, and where $f$ is a smooth objective function.
Note that $\langle\cdot,\cdot\rangle_x$ denotes the inner product on $T_x M$.

In Riemannian optimization, we have to replace several quantities used in Euclidean optimization with appropriate quantities on the Riemannian manifold $(M, \langle\cdot,\cdot\rangle)$ in question. 
For example, the search direction $\eta_k$ at the current point $x_k\in M$ must be a tangent vector to $M$ at $x_k$.
In iterative optimization methods, we perform a line search on appropriate curves on $M$.
Such a curve should emanate from $x_k$ in the direction of $\eta_k$, and can be defined by means of a retraction.
A retraction is defined as follows \cite{AbsMahSep2008}.
\begin{Def}\label{retractiondefgen}
Let $M$ and $TM$ be a manifold and the tangent bundle of $M$, respectively.
Let $R:TM\to M$ be a smooth map and $R_x$ be the restriction of $R$ to $T_xM$.
$R$ is called a retraction on $M$ if it has the following properties.
\begin{enumerate}
\item $R_x(0_x)=x$, where $0_x$ denotes the zero element of $T_xM$.
\item With the canonical identification $T_{0_x}T_xM\simeq T_xM$, $R_x$ satisfies
\begin{equation*}
{\rm D}R_x(0_x)=\id_{T_xM},
\end{equation*}
where $\D R_x(0_x)$ denotes the derivative of $R_x$ at $0_x$, and $\id_{T_xM}$ is the identity map on $T_xM$.
\end{enumerate}
\end{Def}
Using a retraction, the updating formula for line-search-based Riemannian optimization methods can be written as
\begin{equation}
x_{k+1}=R_{x_k}(\alpha_k\eta_k),\label{xupdate}
\end{equation}
where the step size $\alpha_k$ is computed so as to satisfy a certain condition.
Note that \eqref{xupdate} replaces \eqref{ls_euc}.
Throughout this article, we consider the weak Wolfe conditions
\begin{align}
f\left(R_{x_k}(\alpha_k\eta_k)\right)\le f(x_k)+c_1\alpha_k\langle\grad f(x_k),\eta_k\rangle_{x_k},\label{wolfem1}\\
\langle\grad f\left(R_{x_k}(\alpha_k\eta_k)\right),\D R_{x_k}\left(\alpha_k\eta_k\right)[\eta_k]\rangle_{R_{x_k}(\alpha_k\eta_k)}\ge c_2\langle\grad f(x_k),\eta_k\rangle_{x_k},\label{wolfem2}
\end{align}
where $0<c_1<c_2<1$ \cite{ring2012optimization,sato2013cg}.
Note that, on a general Riemannian manifold $M$, $\grad f$ is no longer the Euclidean gradient.
In fact, $\grad f$ is a vector field on $M$, and depends on the Riemannian metric.

In generalizing the Euclidean conjugate gradient method to that on a manifold $M$, the right-hand side of \eqref{ECGsearchvec} cannot be computed, since $\grad f(x_k)\in T_{x_k}M$ and $\eta_{k-1}\in T_{x_{k-1}}M$; that is, the two terms on the right-hand side 
of \eqref{ECGsearchvec} belong to different tangent spaces.

In \cite{AbsMahSep2008}, the notion of a vector transport was introduced to transport a tangent vector to another tangent space.
\begin{Def}\label{vt}
A vector transport $\mathcal{T}$ on a manifold $M$ is a smooth map
\begin{equation*}
TM\oplus TM\to TM:(\eta,\xi)\mapsto \mathcal{T}_{\eta}(\xi)\in TM
\end{equation*}
satisfying the following properties for all $x\in M$, where $\oplus$ is the Whitney sum \cite{AbsMahSep2008},
that is, $TM\oplus TM=\left\{(\eta,\xi)\,|\,\eta,\xi\in T_x M, x\in M\right\}$.
\begin{enumerate}
\item There exists a retraction $R$, called the retraction associated with $\mathcal{T}$, such that
\begin{equation*}
\pi\left(\mathcal{T}_{\eta}(\xi)\right)=R_x\left(\eta\right),\qquad \eta,\xi\in T_x M,
\end{equation*}
where $\pi\left(\mathcal{T}_{\eta}(\xi)\right)$ denotes the foot of the tangent vector $\mathcal{T}_{\eta}(\xi)$,
\item $\mathcal{T}_{0_x}(\xi)=\xi$ for all $\xi\in T_x M$,
\item $\mathcal{T}_{\eta}(a\xi+b\zeta)=a\mathcal{T}_{\eta}(\xi)+b\mathcal{T}_{\eta}(\zeta)$ for all $a, b\in \mathbb R,\ \eta,\xi,\zeta\in T_xM$.
\end{enumerate}
\end{Def}
We can generalize \eqref{ECGsearchvec} using a vector transport $\mathcal{T}$ on $M$ as
\begin{equation}
\eta_k=-\grad f(x_k)+\beta_k\mathcal{T}_{\alpha_{k-1}\eta_{k-1}}(\eta_{k-1}),\qquad k\ge 0,\label{MCGsearchvec}
\end{equation}
or equivalently,
\begin{equation}
\eta_k=-\grad f(x_k)+\beta_k\mathcal{T}^{(k-1)}_{\alpha_{k-1}\eta_{k-1}}(\eta_{k-1}),\qquad k\ge 0,\label{MCGsearchveck}
\end{equation}
where $\mathcal{T}^{(k)}:=c^{(k)}\mathcal{T}$ and $c^{(k)}$ is a positive number.
Thus, we have replaced $\beta_k$ with $\beta_kc^{(k)}$ in \eqref{MCGsearchvec} to obtain \eqref{MCGsearchveck}.
However, the latter expression \eqref{MCGsearchveck} is more useful in our discussion.
We will propose a new choice of $\beta_k$ in Section \ref{sec3}.

A reasonable choice of a vector transport is the differentiated retraction $\mathcal{T}^R$ defined by
\begin{equation}
\mathcal{T}^R_{\eta}(\xi):=\D R_x(\eta)[\xi],\qquad x\in M,\ \eta,\xi\in T_xM.\label{drpre}
\end{equation}
Note that the second condition \eqref{wolfem2} of the weak Wolfe conditions can be rewritten using $\mathcal{T}^R$ as
\begin{equation}
\langle\grad f\left(R_{x_k}(\alpha_k\eta_k)\right),\mathcal{T}^R_{\alpha_k\eta_k}(\eta_k)\rangle_{R_{x_k}(\alpha_k\eta_k)}\ge c_2\langle\grad f(x_k),\eta_k\rangle_{x_k}.\label{wolfem2eq}
\end{equation}
Furthermore, the scaled vector transport $\mathcal{T}^{0}$ associated with $\mathcal{T}^R$ \cite{sato2013cg}, which is defined by
\begin{equation}
\mathcal{T}^0_{\eta}(\xi)=\frac{\norm{\xi}_x}{\norm{\mathcal{T}^R_{\eta}(\xi)}_{R_x(\eta)}}\mathcal{T}^R_{\eta}(\xi),\qquad x\in M,\  \eta,\xi\in T_xM,\label{scaled}
\end{equation}
is important for analyzing the global convergence of our new algorithm.
Note that $\mathcal{T}^0$ is not a vector transport, as it does not satisfy the third condition of Def. \ref{vt}.
However, $\mathcal{T}^0$ has the important property that
\begin{equation*}
\norm{\mathcal{T}^0_{\eta}(\xi)}_{R_x(\eta)}=\norm{\xi}_x,\qquad \eta,\xi\in T_x M.
\end{equation*}

\section{Dai--Yuan-type Euclidean conjugate gradient method and its Riemannian generalization}\label{sec3}
\subsection{Dai--Yuan-type Euclidean conjugate gradient method}
The conjugate gradient method on $\mathbb R^n$ with $\beta_k^{\DY}$ defined by \eqref{betaintro}
was proposed by Dai and Yuan \cite{dai1999nonlinear}. This method is globally convergent under the assumption that each step size $\alpha_k$ satisfies the weak Wolfe conditions \eqref{armijo} and \eqref{curvature}.
Since the Fletcher--Reeves-type conjugate gradient method must be implemented with the strong Wolfe conditions, $\beta_k^{\DY}$ is an improved version of $\beta_k^{\FR}$.
We wish to develop a good analogy of $\beta_k^{\DY}$ for Riemannian manifolds.

Note that, in Euclidean space, we can show the equality
\begin{equation}
\beta_k^{\DY}=\frac{\nabla f(x_k)^T\eta_k}{\nabla f(x_{k-1})^T\eta_{k-1}}\label{EDY2}
\end{equation}
using Eq.~\eqref{ECGsearchvec} as
\begin{align*}
\beta_k^{\DY}=&\frac{\beta_k^{\DY}(\nabla f(x_k)-y_k)^T\eta_{k-1}}{\nabla f(x_{k-1})^T\eta_{k-1}}=\frac{\nabla f(x_k)^T(-\nabla f(x_k)+\beta_k^{\DY}\eta_{k-1})}{\nabla f(x_{k-1})^T\eta_{k-1}}\notag\\
=&\frac{\nabla f(x_k)^T\eta_{k}}{\nabla f(x_{k-1})^T\eta_{k-1}}.
\end{align*}
The equivalent expressions \eqref{betaintro} and \eqref{EDY2} for $\beta^{\DY}_k$ are useful in analyzing the global convergence of the Dai--Yuan-type algorithm in \cite{dai1999nonlinear}.

\subsection{New Riemannian conjugate gradient method based on Euclidean Dai--Yuan $\beta$}
Throughout this subsection, we assume that all quantities that appear in the denominator of a fraction are nonzero. 
However, this assumption can be removed after we propose a new algorithm at the end of this section;
see Prop.~\ref{lem1} for more details.
For simplicity, in some of the following computations, we use the notation
\begin{equation*}
g_k=\grad f(x_k),\qquad k\ge 0.
\end{equation*}

We expect a Riemannian analogy of the Dai--Yuan-type Euclidean conjugate gradient method to have global convergence if the step sizes satisfy the weak Wolfe conditions.

Let $\mathcal{T}$ be a general vector transport on $M$.
Assume that $\mathcal{T}^{(k)}:=c^{(k)}\mathcal{T}$, and use the update formula \eqref{MCGsearchveck} at the $k$-th iteration, where $c^{(k)}$ is a positive number.

Note that $\grad f(x_k)\in T_{x_k}M$ and $\grad f(x_{k-1})\in T_{x_{k-1}}M$ belong to different tangent spaces.
There are several possible ways of generalizing the right-hand side of \eqref{betaintro},
e.g., $\norm{g_k}_{x_k}^2/\langle\mathcal{T}^{(k-1)}_{\alpha_{k-1}\eta_{k-1}}(\eta_{k-1}),g_k-\mathcal{T}^{(k-1)}_{\alpha_{k-1}\eta_{k-1}}(g_{k-1})\rangle_{x_k}$ or $\norm{g_k}_{x_k}^2/\langle\eta_{k-1},(\mathcal{T}^{(k-1)}_{\alpha_{k-1}\eta_{k-1}})^{-1}(g_k)-g_{k-1}\rangle_{x_{k-1}}$.

On the other hand, it seems natural to generalize the right-hand side of \eqref{EDY2}, which is equivalent to \eqref{betaintro} in the Euclidean case, to
\begin{equation}
\beta_k:=\displaystyle\frac{\langle\grad f(x_k),\eta_k\rangle_{x_k}}{\langle\grad f(x_{k-1}),\eta_{k-1}\rangle_{x_{k-1}}}.\label{mybeta0}
\end{equation}
Note also that Eq.~\eqref{mybeta0}, which is an analogy of \eqref{EDY2} in the Euclidean case, is helpful in our global convergence analysis, as we will discuss later.
Therefore, we start with Eq.~\eqref{mybeta0} to generalize the Dai--Yuan $\beta_k$.
We should state that Eq.~\eqref{mybeta0} itself cannot be used in a conjugate gradient algorithm, since $\eta_k$ in the right-hand side of \eqref{mybeta0} is computed using $\beta_k$ itself, as in \eqref{MCGsearchveck}.
We wish to derive an expression for $\beta_k$ that does not contain $\eta_k$.
To this end, we obtain from \eqref{MCGsearchveck} and \eqref{mybeta0} that
\begin{align*}
\beta_k=&\displaystyle\frac{\langle g_k,-g_k+\beta_k\mathcal{T}^{(k-1)}_{\alpha_{k-1}\eta_{k-1}}(\eta_{k-1})\rangle_{x_k}}{\langle g_{k-1},\eta_{k-1}\rangle_{x_{k-1}}}\notag\\
=&\frac{-\norm{g_k}_{x_k}^2+\beta_k\langle g_k, \mathcal{T}^{(k-1)}_{\alpha_{k-1}\eta_{k-1}}(\eta_{k-1})\rangle_{x_k}}{\langle g_{k-1},\eta_{k-1}\rangle_{x_{k-1}}}.
\end{align*}
It follows that
\begin{equation}
\beta_k=\frac{\norm{\grad f(x_k)}_{x_k}^2}{\langle\grad f(x_k), \mathcal{T}^{(k-1)}_{\alpha_{k-1}\eta_{k-1}}(\eta_{k-1})\rangle_{x_k}-\langle\grad f(x_{k-1}),\eta_{k-1}\rangle_{x_{k-1}}}.\label{mybeta}
\end{equation}

We can further show that this $\beta_k$ in fact satisfies
\begin{equation}
\beta_k=\frac{\norm{\grad f(x_k)}_{x_k}^2}{\langle\mathcal{T}^{(k-1)}_{\alpha_{k-1}\eta_{k-1}}(\eta_{k-1}), y_k\rangle_{x_k}},\label{beta_revised}
\end{equation}
with $ y_k\in T_{x_k}M$ defined by
\begin{align}
y_k&=\grad f(x_k)\notag\\
&\ \ -\frac{\langle \grad f(x_{k-1}),\eta_{k-1}\rangle_{x_{k-1}}}{\langle\mathcal{T}^{(k-1)}_{\alpha_{k-1}\eta_{k-1}}(\grad f(x_{k-1})),\mathcal{T}^{(k-1)}_{\alpha_{k-1}\eta_{k-1}}(\eta_{k-1})\rangle_{x_k}}\mathcal{T}^{(k-1)}_{\alpha_{k-1}\eta_{k-1}}(\grad f(x_{k-1})).\label{deftildey}
\end{align}
This is because, from \eqref{deftildey}, it follows that
\begin{align*}
&\langle\mathcal{T}^{(k)}_{\alpha_{k}\eta_{k}}(\eta_{k}), y_{k+1}\rangle_{x_{k+1}}\notag\\
=&\left\langle\mathcal{T}^{(k)}_{\alpha_{k}\eta_{k}}(\eta_{k}), g_{k+1}-\frac{\langle g_{k},\eta_{k}\rangle_{x_{k}}}{\langle\mathcal{T}^{(k)}_{\alpha_{k}\eta_{k}}(g_{k}),\mathcal{T}^{(k)}_{\alpha_k\eta_k}(\eta_k)\rangle_{x_{k+1}}}\mathcal{T}^{(k)}_{\alpha_{k}\eta_{k}}(g_{k})\right\rangle_{x_{k+1}}\notag\\
=&\langle \mathcal{T}^{(k)}_{\alpha_k\eta_k}(\eta_k),g_{k+1}\rangle_{x_{k+1}}-\langle g_k,\eta_k\rangle_{x_k},\label{eq1}
\end{align*}
which implies that the denominators in the right-hand sides of \eqref{mybeta} and \eqref{beta_revised} are the same.

Equations \eqref{beta_revised} and \eqref{deftildey} would appear to be a natural generalization of \eqref{betaintro}.
However, $y_k$, and hence the right-hand side of \eqref{beta_revised}, are not always guaranteed to be well defined, since we cannot ensure that
\begin{equation}
\langle\mathcal{T}^{(k)}_{\alpha_{k}\eta_{k}}(\grad f(x_k)),\mathcal{T}^{(k)}_{\alpha_k\eta_k}(\eta_k)\rangle_{x_{k+1}} \neq 0
\label{neqassump}
\end{equation}
for all $k\ge 0$.
On the other hand, as discussed in Section \ref{sec4}, the right-hand side of Eq.~\eqref{mybeta} is well defined when we use step sizes satisfying the weak Wolfe conditions \eqref{wolfem1} and \eqref{wolfem2}.
Defining $\beta_k$ as \eqref{mybeta} has an advantage over \eqref{beta_revised}, because \eqref{mybeta} is valid without assumption \eqref{neqassump}.

Therefore, our strategy is to define a new $\beta_k$ using \eqref{mybeta}, rather than \eqref{beta_revised}.
Furthermore, similar to the Fletcher--Reeves-type Riemannian conjugate gradient method proposed in \cite{sato2013cg}, we use the scaled vector transport $\mathcal{T}^0$ associated with the differentiated retraction $\mathcal{T}^R$ only when $\mathcal{T}^R$ increases the norm of the search vector.
We now propose a new algorithm as Algorithm \ref{CGMDY}.

\begin{algorithm}[H]
\caption{A scaled Dai--Yuan-type Riemannian conjugate gradient method for Problem \ref{general_prob} on a Riemannian manifold $M$}
\begin{algorithmic}[1]\label{CGMDY}
\STATE Choose an initial point $x_0\in M$.
\STATE Set $\eta_0=-\grad f(x_0)$.
\FOR{$k=0,1,2,\ldots$}
\STATE Compute the step size $\alpha_k>0$ satisfying the weak Wolfe conditions \eqref{wolfem1} and \eqref{wolfem2} with $0<c_1<c_2<1$.
Set
\begin{equation*}
x_{k+1}=R_{x_k}\left(\alpha_k\eta_k\right),
\end{equation*}
where $R$ is a retraction on $M$.
\STATE Set
\begin{equation}
\beta_{k+1}=\frac{\norm{\grad f(x_{k+1})}_{x_{k+1}}^2}{\langle\grad f(x_{k+1}), \mathcal{T}^{(k)}_{\alpha_{k}\eta_{k}}(\eta_{k})\rangle_{x_{k+1}}-\langle\grad f(x_{k}),\eta_{k}\rangle_{x_{k}}},\label{betainalg}
\end{equation}
\begin{equation}
\eta_{k+1}=-\grad f(x_{k+1})+\beta_{k+1}\mathcal{T}^{(k)}_{\alpha_k\eta_k}(\eta_k),\label{tkeq}
\end{equation}
where $\mathcal{T}^{(k)}$ is defined by
\begin{equation}
\mathcal{T}^{(k)}_{\alpha_k\eta_k}(\eta_k)=\begin{cases}
\mathcal{T}^R_{\alpha_k\eta_k}(\eta_k),\qquad \text{if}\ \ \norm{\mathcal{T}^R_{\alpha_k\eta_k}(\eta_k)}_{x_{k+1}}\le \norm{\eta_k}_{x_k},
\\
\mathcal{T}^{0}_{\alpha_k\eta_k}(\eta_k),\qquad \text{otherwise},
\end{cases}\label{tkdef}
\end{equation}
and where $\mathcal{T}^R$ and $\mathcal{T}^0$ are the differentiated retraction and the associated scaled vector transport defined by \eqref{drpre} and \eqref{scaled}, respectively.
\ENDFOR
\end{algorithmic}
\end{algorithm}

Note that \eqref{tkdef} is well defined because, when we choose $\mathcal{T}^0$ at the $k$-th iteration, it holds that $\norm{\mathcal{T}^R_{\alpha_k\eta_k}(\eta_k)}_{x_{k+1}}>\norm{\eta_k}_{x_k}\ge 0$ and the quantity $\norm{\eta_k}_{x_k}/\norm{\mathcal{T}^R_{\alpha_k\eta_k}(\eta_k)}_{x_{k+1}}$ is well defined.
Furthermore, \eqref{tkdef} can also be written as
\begin{equation}
\mathcal{T}^{(k)}_{\alpha_k\eta_k}(\eta_k)=c^{(k)}\mathcal{T}^R_{\alpha_k\eta_k}(\eta_k), \quad c^{(k)}=\min\left\{1, \frac{\norm{\eta_k}_{x_k}}{\norm{\mathcal{T}^R_{\alpha_k\eta_k}(\eta_k)}_{x_{k+1}}}\right\}.\label{tkmin}
\end{equation}
Thus, we obtain the important inequality \cite{sato2013cg}
\begin{equation}
\norm{\mathcal{T}^{(k)}_{\alpha_k\eta_k}(\eta_k)}_{x_{k+1}}\le \norm{\eta_k}_{x_k},\label{Tkineq}
\end{equation}
which is essential in the global convergence analysis of our new algorithm.

\section{Global convergence analysis of the proposed new algorithm}\label{sec4}
In this section, we prove the global convergence property of the proposed algorithm.
We first describe our assumption about the objective function $f$.
\begin{Assump}\label{assump1}
The objective function $f$ is bounded below and of $C^1$-class, and there exists a Lipschitzian constant $L>0$ such that 
\begin{equation*}
\abs{\D(f\circ R_x)(t\eta)[\eta]-\D(f\circ R_x)(0)[\eta]}\le Lt,\ \eta\in T_xM,\ \norm{\eta}_x=1,\ x\in M,\ t\ge 0.\label{lip}
\end{equation*}
\end{Assump}
Examples of $f$ and the conditions under which this assumption holds are given in \cite{sato2013cg}.

We review a Riemannian analogy of Zoutendijk's theorem.
See \cite{ring2012optimization,sato2013cg} for more details.
\begin{Thm}\label{zoutendijk}
Suppose that a sequence $\{x_k\}$ on a Riemannian manifold $M$ is generated by a general line-search-based optimization algorithm; that is, by Eq.~\eqref{xupdate} with a retraction $R$ on $M$.
Suppose also that each search direction $\eta_k$ satisfies $\langle \grad f(x_k),\eta_k\rangle_{x_k}<0$ and that $\alpha_k$ satisfies the weak Wolfe conditions \eqref{wolfem1} and \eqref{wolfem2}.
If Assumption \ref{assump1} is satisfied, then the following series converges:
\begin{equation*}
\sum_{k=0}^\infty \cos^2\theta_k\norm{\grad f(x_k)}_{x_k}^2<\infty,\label{zoucos}
\end{equation*}
where $\cos\theta_k$ is defined by
\begin{equation*}
\cos\theta_k=-\frac{\langle\grad f(x_k),\eta_k\rangle_{x_k}}{\norm{\grad f(x_k)}_{x_k}\norm{\eta_k}_{x_k}}.
\end{equation*}
\end{Thm}

To show that $\beta_k$ in Algorithm \ref{CGMDY}, and hence the algorithm itself, is well defined, we prove that the search direction $\eta_k$ is a descent direction; that is, $\langle g_k,\eta_k\rangle_{x_k}<0$.
Note that in \cite{dai1999nonlinear}, the definition \eqref{betaintro} of $\beta_k^{\DY}$, which is a Euclidean analogy of the expression in~\eqref{beta_revised}, is fully used to prove that the Euclidean Dai--Yuan-type conjugate gradient method generates descent search directions.
In general, however, we cannot use \eqref{beta_revised}, as it is not well defined.
Therefore, we can only use \eqref{mybeta}.
Note that if $\langle g_k,\eta_k\rangle_{x_k}\neq 0$ for all $k \ge 0$, \eqref{mybeta} is equivalent to \eqref{mybeta0}.
In the proof of the next proposition, we use a different approach from that in \cite{dai1999nonlinear}.
\begin{Prop}\label{lem1}
If $\grad f(x_k)\neq 0$ for all $k\ge 0$, then Algorithm \ref{CGMDY} is well defined and the following two inequalities hold:
\begin{align}
\langle \grad f(x_k),\eta_k\rangle_{x_k}<0,\label{descent}\\
\langle \grad f(x_k),\eta_k\rangle_{x_k}<\langle \grad f(x_{k+1}),\mathcal{T}^{(k)}_{\alpha_k\eta_k}(\eta_k)\rangle_{x_{k+1}}.\label{ineq1}
\end{align}
\end{Prop}
\begin{proof}
The proof is by induction.
For $k=0$, the first inequality \eqref{descent} follows directly from $\eta_0=-g_0$.
We shall prove \eqref{ineq1} for $k=0$.
If $\langle g_1, \mathcal{T}^{(0)}_{\alpha_0\eta_0}(\eta_0)\rangle_{x_1}\ge 0$, it immediately follows that
\begin{equation}
\langle g_1, \mathcal{T}^{(0)}_{\alpha_0\eta_0}(\eta_0)\rangle_{x_1}\ge 0> \langle g_0,\eta_0\rangle_{x_0}.\label{ineq100}
\end{equation}
If $\langle g_1, \mathcal{T}^{(0)}_{\alpha_0\eta_0}(\eta_0)\rangle_{x_1}<0$,
and hence $\langle g_1,\mathcal{T}^R_{\alpha_0\eta_0}(\eta_0)\rangle_{x_1}< 0$,
then the second condition \eqref{wolfem2} of the weak Wolfe conditions with $0<c_2<1$ and \eqref{tkmin} gives
\begin{align}
\langle g_1,\mathcal{T}^{(0)}_{\alpha_0\eta_0}(\eta_0)\rangle_{x_1}=&\min\left\{1, \frac{\norm{\eta_0}_{x_0}}{\norm{\mathcal{T}^R_{\alpha_0\eta_0}(\eta_0)}_{x_{1}}}\right\}\langle g_{1},\mathcal{T}^R_{\alpha_0\eta_0}(\eta_0)\rangle_{x_{1}}\notag\\
\ge& \langle g_{1},\mathcal{T}^R_{\alpha_0\eta_0}(\eta_0)\rangle_{x_{1}}\ge c_2\langle g_0,\eta_0\rangle_{x_0}>\langle g_0,\eta_0\rangle_{x_0},\label{ineq10}
\end{align}
where we have used the fact that \eqref{wolfem2} is equivalent to \eqref{wolfem2eq}.
Thus, \eqref{ineq1} has been proved for $k=0$, and $\beta_1$ is well defined by \eqref{betainalg}.

Next, suppose that $\beta_k$ is well defined by the right-hand side of \eqref{mybeta}, and that both inequalities \eqref{descent} and \eqref{ineq1} hold for some $k$.
Note that $\beta_{k+1}$ is well defined from the assumption in \eqref{ineq1} for $k$.
The left-hand side of \eqref{descent} for $k+1$ can then be calculated as
\begin{align*}
&\langle g_{k+1},\eta_{k+1}\rangle_{x_{k+1}}=\langle g_{k+1},-g_{k+1}+\beta_{k+1}\mathcal{T}^{(k)}_{\alpha_k\eta_k}(\eta_k)\rangle_{x_{k+1}}\notag\\
=&-\norm{g_{k+1}}_{x_{k+1}}^2+\frac{\norm{g_{k+1}}_{x_{k+1}}^2}{\langle g_{k+1},\mathcal{T}^{(k)}_{\alpha_k\eta_k}(\eta_k)\rangle_{x_{k+1}}-\langle g_k,\eta_k\rangle_{x_k}}\langle g_{k+1},\mathcal{T}^{(k)}_{\alpha_k\eta_k}(\eta_k)\rangle_{x_{k+1}}\notag\\
=&\frac{\norm{g_{k+1}}_{x_{k+1}}^2\langle g_k,\eta_k\rangle_{x_k}}{\langle g_{k+1},\mathcal{T}^{(k)}_{\alpha_k\eta_k}(\eta_k)\rangle_{x_{k+1}}-\langle g_k,\eta_k\rangle_{x_k}}<0,
\end{align*}
where we have used inequalities \eqref{descent} and \eqref{ineq1} for $k$.
This means that \eqref{descent} holds for $k+1$.
It then follows from a similar argument to \eqref{ineq100} and \eqref{ineq10} that
\begin{equation*}
\langle g_{k+1},\eta_{k+1}\rangle_{x_{k+1}}<\langle g_{k+2},\mathcal{T}^{(k+1)}_{\alpha_{k+1}\eta_{k+1}}(\eta_{k+1})\rangle_{x_{k+2}}.
\end{equation*}
This completes the proof.
\end{proof}

The following corollary immediately follows from \eqref{ineq1}.
\begin{Cor}
In Algorithm \ref{CGMDY}, if $\grad f(x_k)\neq 0$ for all $k\ge 0$, then we have $\beta_k>0$ for all $k\ge 1$.
\end{Cor}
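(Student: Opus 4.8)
The plan is to read off the sign of $\beta_k$ directly from its defining formula \eqref{betainalg}, using the two inequalities established in Prop.~\ref{lem1}. Concretely, for any $k\ge 0$ the parameter $\beta_{k+1}$ is the ratio whose numerator is $\norm{\grad f(x_{k+1})}_{x_{k+1}}^2$ and whose denominator is
\[
\langle\grad f(x_{k+1}),\mathcal{T}^{(k)}_{\alpha_k\eta_k}(\eta_k)\rangle_{x_{k+1}}-\langle\grad f(x_k),\eta_k\rangle_{x_k}.
\]
First I would observe that the numerator is strictly positive: since the standing hypothesis of the corollary is $\grad f(x_k)\neq 0$ for every $k\ge 0$, in particular $\grad f(x_{k+1})\neq 0$, so $\norm{\grad f(x_{k+1})}_{x_{k+1}}^2>0$.

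Next I would show the denominator is strictly positive. This is exactly the content of the second inequality \eqref{ineq1} of Prop.~\ref{lem1}, which under the same hypothesis $\grad f(x_k)\neq 0$ for all $k$ guarantees
\[
\langle\grad f(x_k),\eta_k\rangle_{x_k}<\langle\grad f(x_{k+1}),\mathcal{T}^{(k)}_{\alpha_k\eta_k}(\eta_k)\rangle_{x_{k+1}},
\]
i.e.\ the displayed denominator is positive. Combining the two observations, $\beta_{k+1}$ is a quotient of two positive quantities and is therefore positive. Since $k\ge 0$ was arbitrary, this yields $\beta_k>0$ for all $k\ge 1$, completing the argument.

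I do not expect any genuine obstacle here: the corollary is an immediate consequence of Prop.~\ref{lem1}, and essentially all the real work—establishing that $\eta_k$ is a descent direction and that the strict inequality \eqref{ineq1} propagates through the induction via the weak Wolfe curvature condition \eqref{wolfem2eq} and the scaling bound \eqref{Tkineq}—has already been carried out in the proof of that proposition. The only point worth stating carefully is that the hypothesis $\grad f(x_k)\neq 0$ for all $k$ is precisely what is needed both to make the numerator nonzero and to invoke \eqref{ineq1}, so that $\beta_{k+1}$ is well defined and strictly positive simultaneously.
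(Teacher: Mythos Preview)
Your argument is correct and matches the paper's own reasoning: the paper states that the corollary ``immediately follows from \eqref{ineq1},'' and your write-up simply spells this out by noting that the numerator of \eqref{betainalg} is positive by the hypothesis $\grad f(x_{k+1})\neq 0$ and the denominator is positive by \eqref{ineq1}.
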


We now proceed to our main theorem.
The proof is performed in a manner analogous to the Euclidean version in \cite{dai1999nonlinear} with the aid of a scaled vector transport, as in \cite{sato2013cg}.
\begin{Thm}\label{globalthm}
Let $\{x_k\}$ be a sequence generated by Algorithm \ref{CGMDY}.
If Assumption \ref{assump1} is satisfied, then we have
\begin{equation}
\liminf_{k\to\infty}\norm{\grad f(x_k)}_{x_k}=0.\label{eqconv}
\end{equation}
\end{Thm}
\begin{proof}
We first note that, from Prop.~\ref{lem1}, all the assumptions in Thm.~\ref{zoutendijk} are satisfied.
Therefore, we have
\begin{equation}
\sum_{k=0}^{\infty}\frac{\langle g_k,\eta_k\rangle_{x_k}^2}{\norm{\eta_k}_{x_k}^2} < \infty.\label{zoutenineq}
\end{equation}

If $g_{k_0}=0$ for some $k_0$, then $\beta_{k_0}=0$, followed by $\eta_{k_0}=0$ and $x_{k_0+1}=R_{x_{k_0}}(0)=x_{k_0}$.
Thus, it is sufficient to prove \eqref{eqconv} only when $g_k\neq 0$ for all $k\ge 0$.
In such a case, it follows from \eqref{tkeq} that
\begin{equation*}
\eta_{k+1}+g_{k+1}=\beta_{k+1}\mathcal{T}^{(k)}_{\alpha_k\eta_k}(\eta_k).
\end{equation*}
Taking the norms and squaring, we obtain
\begin{equation*}
\norm{\eta_{k+1}}_{x_{k+1}}^2=\beta_{k+1}^2\norm{\mathcal{T}^{(k)}_{\alpha_k\eta_k}(\eta_k)}_{x_{k+1}}^2-2\langle g_{k+1}, \eta_{k+1}\rangle_{x_{k+1}}-\norm{g_{k+1}}_{x_{k+1}}^2.
\end{equation*}
This equality, together with \eqref{descent}, yields
\begin{align*}
&\frac{\norm{\eta_{k+1}}_{x_{k+1}}^2}{\langle g_{k+1}, \eta_{k+1}\rangle_{x_{k+1}}^2}
=\frac{\norm{\mathcal{T}^{(k)}_{\alpha_k\eta_k}(\eta_k)}_{x_{k+1}}^2}{\langle g_k, \eta_k\rangle_{x_k}^2}-\frac{2}{\langle g_{k+1}, \eta_{k+1}\rangle_{x_{k+1}}}-\frac{\norm{g_{k+1}}_{x_{k+1}}^2}{\langle g_{k+1}, \eta_{k+1}\rangle_{x_{k+1}}^2}\notag\\
=&\frac{\norm{\mathcal{T}^{(k)}_{\alpha_k\eta_k}(\eta_k)}_{x_{k+1}}^2}{\langle g_k, \eta_k\rangle_{x_k}^2}-\left(\frac{1}{\norm{g_{k+1}}_{x_{k+1}}}+\frac{\norm{g_{k+1}}_{x_{k+1}}}{\langle g_{k+1}, \eta_{k+1}\rangle_{x_{k+1}}}\right)^2+\frac{1}{\norm{g_{k+1}}_{x_{k+1}}^2}\notag\\
\le &\frac{\norm{\eta_k}_{x_{k}}^2}{\langle g_k, \eta_k\rangle_{x_k}^2}+\frac{1}{\norm{g_{k+1}}_{x_{k+1}}^2},
\end{align*}
where we have used the inequality \eqref{Tkineq} and the fact that $\beta_{k+1}$ is equal to the right-hand side of \eqref{mybeta0} for $k+1$, since $\langle g_k,\eta_k\rangle_{x_k}\neq 0$ from \eqref{descent}.
We thus arrive at the relation
\begin{equation}
\frac{\norm{\eta_{k}}_{x_{k}}^2}{\langle g_{k}, \eta_{k}\rangle_{x_{k}}^2}\le\sum_{i=1}^{k}\frac{1}{\norm{g_{i}}_{x_{i}}^2}+\frac{\norm{\eta_0}_{x_{0}}^2}{\langle g_0, \eta_0\rangle_{x_0}^2}=\sum_{i=0}^{k}\frac{1}{\norm{g_{i}}_{x_{i}}^2}.\label{eqthm}
\end{equation}

We are now in a position to prove \eqref{eqconv} by contradiction.
Given that we are now assuming $\grad f(x_k)\neq 0$ for all $k\ge 0$, if \eqref{eqconv} does not hold, then there exists a constant $C>0$ such that
\begin{equation}
\norm{g_k}_{x_k}\ge C,\qquad k\ge 0.\label{ineq2}
\end{equation}
From \eqref{eqthm} and \eqref{ineq2}, we obtain
\begin{equation*}
\frac{\norm{\eta_{k}}_{x_{k}}^2}{\langle g_{k}, \eta_{k}\rangle_{x_{k}}^2}\le\frac{k+1}{C^2}.
\end{equation*}
This leads to
\begin{equation*}
\sum_{k=0}^{N}\frac{\langle g_k,\eta_k\rangle_{x_k}^2}{\norm{\eta_k}_{x_k}^2}\ge C^2\sum_{k=0}^N\frac{1}{k+1}\to\infty,\qquad N\to \infty,
\end{equation*}
which contradicts \eqref{zoutenineq}.
This proves the theorem.
\end{proof}

\section{Numerical Experiments}\label{sec5}
In this section, we describe some numerical experiments designed to show the effectiveness of the proposed algorithm.
The experiments are carried out by using MATLAB R2014b on a PC with Intel Core i7-4790 3.60 GHz CPU, 16 GB of RAM memory and Windows 8.1 Pro 64-bit operating system.
In the experiments, we determine that a sequence converges to an optimal solution if $\norm{\grad f(x_k)}<\varepsilon$ for a predetermined tolerance $\varepsilon >0$.
We refer to $\beta_k$ in Algorithm \ref{CGMDY} as $\beta^{\DY}_k$, and to the Fletcher--Reeves-type $\beta_k$, which is discussed in \cite{sato2013cg}, as $\beta^{\FR}_k$, that is,
\begin{align*}
\beta^{\DY}_{k+1}=\frac{\norm{g_{k+1}}_{x_{k+1}}^2}{\langle g_{k+1}, \mathcal{T}^{(k)}_{\alpha_{k}\eta_{k}}(\eta_{k})\rangle_{x_{k+1}}-\langle g_{k},\eta_{k}\rangle_{x_{k}}},\quad
\beta^{\FR}_{k+1}=\frac{\norm{g_{k+1}}_{x_{k+1}}^2}{\norm{g_k}_{x_k}^2}.
\end{align*}

\subsection{Line search algorithms}
Let $\phi_k$ denote a one-variable function defined by $\phi_k(\alpha)=f(R_{x_k}(\alpha\eta_k))$, where $\eta_k$ is a search direction at $x_k\in M$.
Then, the Wolfe conditions \eqref{wolfem1} and \eqref{wolfem2} can be rewritten as
\begin{align}
\phi_k(\alpha_k)\le \phi_k(0)+c_1\phi_k'(0),\label{wolfe1ls}\\
\phi_k'(\alpha_k)\ge c_2\phi_k'(0).\label{wolfe2ls}
\end{align}
In terms of finding some $\alpha_k$ that satisfies \eqref{wolfe1ls} and \eqref{wolfe2ls},
the problem setting is the same as that in Euclidean optimization problems.
Therefore, we can use existing algorithms such as those in \cite{lemarechal1981view} and \cite{nocedal2006numerical}.
Note that \eqref{wolfe2ls} is replaced by
\begin{equation*}
\abs{\phi_k'(\alpha_k)}\le c_2\abs{\phi_k'(0)}
\end{equation*}
in the strong Wolfe conditions.

In the following numerical experiments, unless otherwise noted, we set $c_1=10^{-4}$ and $c_2=0.1$ and use line search algorithms for the weak and strong Wolfe conditions based on \cite{lemarechal1981view} and \cite{nocedal2006numerical}, respectively.
See Figure 1 in \cite{lemarechal1981view} and Algorithms 3.5 and 3.6 in \cite{nocedal2006numerical} for further details.

In the algorithm in \cite{lemarechal1981view}, we must interpolate a lower bound $t_{{\rm L}}$ and an upper bound $t_{{\rm R}}$ of an appropriate step size to find $t_{{\rm interpol}}$.
Furthermore, when no upper bound is available, we have to extrapolate the lower bound $t_{{\rm L}}$ to find $t_{{\rm extrapol}}$.
To this end, we use the simple formula
\begin{equation*}
t_{{\rm interpol}}:=\frac{t_{{\rm L}}+t_{{\rm R}}}{2},\quad t_{{\rm extrapol}}:=2t_{{\rm L}}.
\end{equation*}

In Algorithm 3.5 in \cite{nocedal2006numerical}, we have to extrapolate an estimated step size $\alpha^{(i)}$ to determine a value $\alpha^{(i+1)}$, where the superscript is the inner iteration number of Algorithm 3.5 in \cite{nocedal2006numerical}.
We use the formula introduced in \cite{nocedal2006numerical}:
\begin{equation}
\alpha:=\alpha^{(i)}-(\alpha^{(i)}-\alpha^{(i-1)})\frac{\phi'(\alpha^{(i)})+d_2-d_1}{\phi'(\alpha^{(i)})-\phi'(\alpha^{(i-1)})+2d_2},\label{extra}
\end{equation}
with
\begin{align*}
d_1=&\phi'(\alpha^{(i-1)})+\phi'(\alpha^{(i)})-3\frac{\phi(\alpha^{(i-1)})-\phi(\alpha^{(i)})}{\alpha^{(i-1)}-\alpha^{(i)}},\\
d_2=&\sign(\alpha^{(i)}-\alpha^{(i-1)})\sqrt{d_1^2-\phi'(\alpha^{(i-1)})\phi'(\alpha^{(i)})},
\end{align*}
where $\sign$ is the sign function.
This formula is based on the cubic interpolation of $\alpha^{(i-1)}$ and $\alpha^{(i)}$.
Note that the $\alpha$ obtained in \eqref{extra} may be too small or too large.
To safeguard this, we use a rule introduced in \cite{fletcher2013practical}, that is, we choose $\alpha^{(i+1)}$ from the interval $[2\alpha^{(i)}-\alpha^{(i-1)}, \alpha^{(i)}+9(\alpha^{(i)}-\alpha^{(i-1)})]$.
Together with Eq.~\eqref{extra}, this rule gives the formula
\begin{equation}
\alpha^{(i+1)}:=\min\{\max\{\alpha, 2\alpha^{(i)}-\alpha^{(i-1)}\}, \alpha^{(i)}+9(\alpha^{(i)}-\alpha^{(i-1)})\}.\label{extra2}
\end{equation}
We implement Algorithms 3.5 and 3.6 in \cite{nocedal2006numerical} using \eqref{extra2}.

\subsection{Rayleigh quotient minimization on the sphere}
Let $A$ be an $n\times n$ symmetric matrix.
Throughout this section, we consider the problem of minimizing 
\begin{equation*}
f(x)=x^TAx
\end{equation*}
on the unit sphere $S^{n-1}=\{x\in\mathbb R^n\,|\,x^Tx=1\}$ endowed with the natural Riemannian metric
\begin{equation*}
\langle \xi,\eta\rangle_x = \xi^T\eta,\quad \xi, \eta \in T_x S^{n-1},\ x\in S^{n-1},
\end{equation*}
in which case the gradient of the objective function $f$ is written as
\begin{equation*}
\grad f(x)=2(I_n-xx^T)Ax.
\end{equation*}
For simplicity, we use a retraction $R$ defined by
\begin{equation*}
R_x(\xi)=\frac{x+\xi}{\norm{x+\xi}},\quad \xi\in T_x S^{n-1},\ x\in S^{n-1},
\end{equation*}
where $\norm{\cdot}$ denotes the Euclidean norm, and use the associated differentiated retraction \cite{AbsMahSep2008}
\begin{align*}
&\mathcal{T}_{\eta}(\xi)=\D R_x(\eta)[\xi]\notag\\
=&\frac{1}{\norm{x+\eta}}\left(I_n-\frac{1}{\norm{x+\eta}^2}(x+\eta)(x+\eta)^T\right)\xi,\quad \eta,\xi\in T_x S^{n-1},\ x\in S^{n-1}.
\end{align*}

For means of reproducibility, we first set $A=\diag(1,2,\ldots,n)$, $x_0={\bm 1}_n/\sqrt{n}$, and $\varepsilon = 10^{-5}$ without using random values, where ${\bm 1}_n$ is an $n$-dimensional vector whose elements are all equal to $1$.
We then apply conjugate gradient methods with $\beta^{\FR}$ and $\beta^{\DY}$
under the weak and strong Wolfe conditions,
and with $n=100$ and $n=500$.
\begin{figure}[htbp]
  \begin{center}
   \includegraphics[width=100mm]{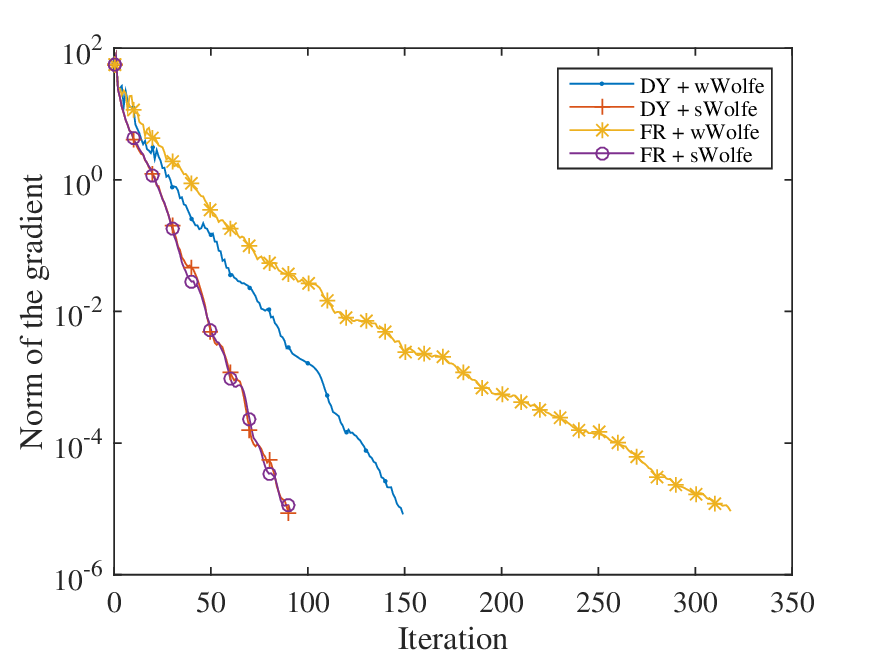}
  \end{center}
  \caption{The sequence of $\norm{\grad f(x_k)}_{x_k}$ evaluated on the sequence $\left\{x_k\right\}$ generated by Algorithm \ref{CGMDY} with $n=100$.}
  \label{fig1}
\end{figure}
\begin{figure}[htbp]
  \begin{center}
   \includegraphics[width=100mm]{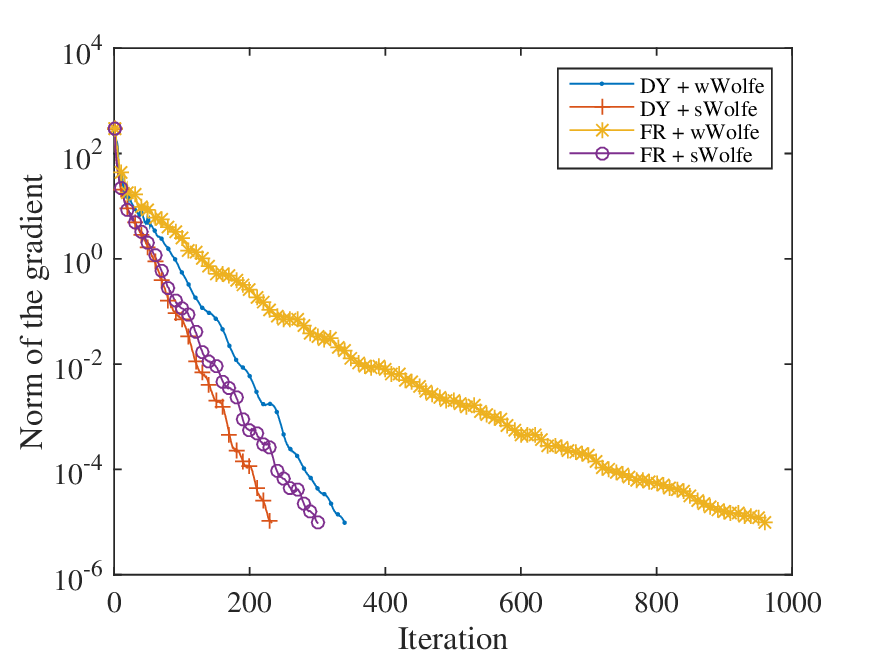}
  \end{center}
  \caption{The sequence of $\norm{\grad f(x_k)}_{x_k}$ evaluated on the sequence $\left\{x_k\right\}$ generated by Algorithm \ref{CGMDY} with $n=500$.}
  \label{fig2}
\end{figure}
Figure~\ref{fig1} (for $n=100$) and Fig.~\ref{fig2} (for $n=500$) show the results of the experiments.
In Fig.~\ref{fig2}, the graph corresponding to the method with $\beta^{\FR}$ and the weak Wolfe conditions implies that this method is much slower than the others.
In fact, with some other initial points, no appropriate step size satisfying the weak Wolfe conditions can be found for this method.
For example, with $x_0=({\bm 1}_{35}^T \ \  {\bm 0}_{465}^T)^T/\sqrt{35}$, we have $\langle \grad f(x_k), \eta_k\rangle_{x_k}=1.2646\times 10^{-4}>0$ at $k=37$,
which means that the search direction $\eta_k$ is not a descent direction for $f$ at $x_k$.
In contrast, under Prop.~\ref{lem1}, the method with $\beta^{\DY}$ and the weak Wolfe conditions is guaranteed to have the property $\langle \grad f(x_k), \eta_k\rangle_{x_k}<0$ for all $k\ge 0$.
The number of iterations, function evaluations, and gradient evaluations, as well as the average computational time (1000 times) required for convergence, are shown in Table \ref{table1} (for $n=100$) and Table \ref{table2} (for $n=500$).
Note that we have abbreviated the weak and strong Wolfe conditions as wWolfe and sWolfe, respectively.

\begin{table}[htbp]
\begin{center}
\caption{Comparison of several quantities between different conjugate gradient methods with $n=100$.}
\scriptsize 
\begin{tabular}{ccccc}
\hline
\backslashbox{Method}{} &  Iterations & Function Evals. & Gradient Evals. & Computational time  \\ \hline
\vspace{-2.5mm}
\\
DY + wWolfe & $149$ & $210$ & $206$ & $0.0175$\\ 
DY + sWolfe & $90$ & $288$ & $244$ & $0.0187$\\
FR + wWolfe & $318$ & $619$ & $577$ & $0.0429$\\
FR + sWolfe & $91$ & $293$ & $258$ & $0.0191$\\ \hline
\end{tabular}
\label{table1}
\end{center}
\end{table}

\begin{table}[htbp]
\begin{center}
\caption{Comparison of several quantities between different conjugate gradient methods with $n=500$.}
\scriptsize 
\begin{tabular}{ccccc}
\hline
\backslashbox{Method}{} &  Iterations & Function Evals. & Gradient Evals. & Computational time  \\ \hline
\vspace{-2.5mm}
\\
DY + wWolfe & $340$ & $373$ & $367$ & $0.0522$\\ 
DY + sWolfe & $232$ & $657$ & $467$ & $0.0658$\\
FR + wWolfe & $960$ & $1902$ & $1757$ & $0.1988$\\
FR + sWolfe & $300$ & $723$ & $529$ & $0.0730$\\ \hline
\end{tabular}
\label{table2}
\end{center}
\end{table}

These results indicate that we should not use the Fletcher--Reeves-type method with the weak Wolfe conditions, because it is not always guaranteed to generate a converging sequence.
In Tables \ref{table1} and \ref{table2}, we can observe that ``DY + wWolfe'' requires the shortest computational time.
In the following, we compare three methods, namely ``DY + wWolfe'', ``DY + sWolfe'', and ``FR + sWolfe'', using more general $1000$ problems.

We next set $n=100$, generate $1000$ symmetric matrices $A$ and initial points $x_0$ with randomly chosen elements, and then apply the above three methods.
Table \ref{table3} shows the average values of the quantities in Tables \ref{table1} and \ref{table2} over the $1000$ matrices $A$.

\begin{table}[htbp]
\begin{center}
\caption{Comparison of the averages of several quantities between different conjugate gradient methods with $1000$ randomly chosen matrices $A$.}
\scriptsize 
\begin{tabular}{ccccc}
\hline
\backslashbox{Method}{} &  Iterations & Function Evals. & Gradient Evals. & Computational time  \\ \hline
\vspace{-2.5mm}
\\
DY + wWolfe & $242.751$ & $538.177$ & $469.628$& $0.0334$\\ 
DY + sWolfe & $160.270$ & $529.736$& $410.278$& $0.0322$\\
FR + sWolfe & $201.441$& $649.900$& $513.879$& $0.0390$\\ \hline
\end{tabular}
\label{table3}
\end{center}
\end{table}

Even though ``DY + wWolfe'' requires the most iterations, it generates fewer function and gradient evaluations per iteration.
In our experiments, ``DY + sWolfe'' had the shortest average computational time.
In fact, however, ``DY + sWolfe'' was not the fastest method for all $1000$ matrices, as ``DY + wWolfe'' converged faster with $379$ of the $1000$ matrices.
Furthermore, we can observe that the Dai--Yuan-type Riemannian conjugate gradient method is more efficient than the Fletcher--Reeves-type method.

We also perform the same experiments as in Table \ref{table3} with other choices of $c_1$ and $c_2$ in the (strong) Wolfe conditions:
$(c_1, c_2) = (10^{-4}, 10^{-2}), (10^{-4}, 0.2), (10^{-4}, 0.3), (10^{-4}, 0.4), (0.1, 0.2),$ $(0.1, 0.3), (0.1, 0.4)$.
The results show that the choice of $c_1$ does not affect the performance of the proposed method very much,
while the choice of $c_2$ can improve or worsen the performance.
In fact, in our experiments, the proposed method with $(c_1, c_2)=(0.1, 0.4)$ was the fastest.
The three methods ``DY + wWolfe'', ``DY + sWolfe'', and ``FR + sWolfe'' with $(c_1, c_2)=(0.1, 0.4)$ required $0.0255$, $0.0310$, and $0.0465$ seconds (on average) for convergence, respectively.
In this case, ``DY + wWolfe'' converged faster than ``DY + sWolfe'' with $842$ of the $1000$ matrices.
Even though a better choice of $c_1$ and $c_2$ can improve conjugate gradient methods,
we observe that the Dai--Yuan-type method is always more efficient than the Fletcher--Reeves-type method.

\section{Concluding Remarks}\label{sec6}
We have proposed a new Riemannian conjugate gradient method based on the Dai--Yuan-type Euclidean conjugate gradient algorithm.
To generalize $\beta_k^{\DY}=\nabla f(x_k)^T\nabla f(x_k)/\eta_{k-1}^Ty_k$ in the Dai--Yuan algorithm, we used another expression $\beta^{\DY}_k=\nabla f(x_k)^T\eta_k/\nabla f(x_{k-1})^T\eta_{k-1}$ and the notion of a vector transport.
We thus proposed a new $\beta_k$, and hence a new Riemannian conjugate gradient method.

We have proved that the proposed algorithm is well defined and that sequences generated by the algorithm are globally convergent.
The notion of a scaled vector transport plays an essential role in the convergence analysis,
that is, property \eqref{Tkineq} leads to a contradiction with Zoutendijk's theorem
if we assume that the assertion of Thm.~\ref{globalthm} is false.
One advantage of our algorithm is that we do not have to search for step sizes satisfying the strong Wolfe conditions, which are necessary conditions for the existing Fletcher--Reeves-type algorithm, and need only compute the step sizes with the weak Wolfe conditions.

Through a series of numerical experiments, we demonstrated that the Dai--Yuan-type method is better than the Fletcher--Reeves-type approach.
Implementing the Dai--Yuan-type method with the weak Wolfe conditions is not always better than that with the strong Wolfe conditions, but is preferable for many problems.
Nevertheless, we can choose which conditions we use according to the form of the objective function, the size of the problem, and so on.

It is also worth pointing out that the role of the scaled vector transport in Algorithm \ref{CGMDY} may be imposed on $\beta_k$.
Putting the scaling factor $\min\left\{1, \norm{\eta_k}_{x_k}/\norm{\mathcal{T}^R_{\alpha_k\eta_k}(\eta_k)}_{x_{k+1}}\right\}$ on $\beta_{k+1}$, we can rewrite the computations in Step 5 of Algorithm \ref{CGMDY} as
\begin{equation*}
c^{(k)}=\min\left\{1, \frac{\norm{\eta_k}_{x_k}}{\norm{\mathcal{T}^R_{\alpha_k\eta_k}(\eta_k)}_{x_{k+1}}}\right\},
\end{equation*}
\begin{equation*}
\bar\beta_{k+1}=\frac{c^{(k)}\norm{\grad f(x_{k+1})}_{x_{k+1}}^2}{c^{(k)}\langle\grad f(x_{k+1}), \mathcal{T}^{R}_{\alpha_{k}\eta_{k}}(\eta_{k})\rangle_{x_{k+1}}-\langle\grad f(x_{k}),\eta_{k}\rangle_{x_{k}}},
\end{equation*}
\begin{equation*}
\eta_{k+1}=-\grad f(x_{k+1})+\bar\beta_{k+1}\mathcal{T}^{R}_{\alpha_k\eta_k}(\eta_k).
\end{equation*}
Although these formulas stem from the notion of a scaled vector transport, 
they can be regarded as if we are not using a scaled vector transport, but are simply applying another type of $\beta_k$ with a usual vector transport.
These may be easier to deal with in the common framework of the conjugate gradient method discussed in \cite{AbsMahSep2008}.

\section*{Acknowledgments}
The author would like to thank the anonymous referees for their valuable comments that helped improve the paper significantly.
This work was supported by JSPS (Japan Society for the Promotion of Science) KAKENHI Grant Number 26887037.

\bibliography{refCOA.bib}
\end{document}